%
\documentclass[12pt, reqno]{amsart}
\usepackage{amsmath, amsthm, amscd, amsfonts, amssymb, graphicx, color, xypic}
\usepackage[british]{babel}
\hyphenation{uni-for-mi-za-tion}


\newtheorem{theorem}{Theorem}[section]

\newtheorem{proposition}[theorem]{Proposition}
\newtheorem{corollary}[theorem]{Corollary}
\theoremstyle{definition}
\newtheorem{definition}[theorem]{Definition}
\newtheorem{example}[theorem]{Example}

\theoremstyle{remark}
\newtheorem{remark}[theorem]{Remark}
\numberwithin{equation}{section}

\begin{document}
\setcounter{page}{1}

\title[First integral and uniformization over the torus]
{First integral by means of the uniformization over the torus}

\author[O. Galdames-Bravo]{Orlando Galdames-Bravo}

\address{O. Galdames Bravo\\
Departamento de Matem\'aticas\\
Universidad de Valencia\\
Doctor Moliner, 50\\
46100 - Burjassot. Spain.}

\email{\textcolor[rgb]{0.00,0.00,0.84}{orlando.galdames@uv.es}}

\subjclass[2010]{Primary 37B30. Secondary 54H20, 74H20}

\keywords{vector field, index, uniformization, first integral}

\date{\today}

\begin{abstract}
The uniformization of a direction field was defined by Finn (in 1973) for the classification 
of certain differential operators. In the present note we recover the idea of uniformization 
in order to generalize it and apply it to the existence of first integrals. Roughly speaking, 
we say that a plane vector field $X$ on $D\subset\mathbb{R}^2$
is uniformizable over $T^2$ (the torus) if there is a constant vector field $Z$ on $\Delta\subset T^2$ and a 
diffeomorphism $\psi\colon D\to \Delta\subset T^2$ such that $d\psi\circ X = Z\circ\psi$. 
\end{abstract} \maketitle

\section{Introduction and terminology}

The existence of a first integral for a vector field is a classical problem in the theory 
of dynamical systems. One of the main tools to this aim is the study of the rectification 
or uniformization of a vector field. Roughly speaking, we understand as uniformization of a 
vector field with plane domain, as the existence of a diffeomorphic map that brings such a 
vector field to a constant vector field. Local uniformization is a well known property of 
any vector field with mild conditions (see e.g \cite[Theorem 1.2]{Dum-Lli-Art}). 
Regarding global plane domains, there are several results over time that deal with several 
techniques in order to find a first integral or solve a related problem, see for instance 
\cite{Bou, Kamke, Kaplan, Mazzi1, Muller, Weiner}. \\

The technique we introduce in the present note is based in the ideas presented by Finn in 
\cite{Finn} (who studied the behavior of the index of a plane direction field along a Jordan 
curve that represent the boundary of the domain) and the suggestion of Mart\'inez-Alfaro 
for the generalization of this concept to other not plane manifolds. In \cite{Finn}, the 
author studied direction fields which have index $0$ or $2$ along suitable curves. Thanks 
to the generalization we give we can extend this study to the index $1$. In brief, a vector 
field is uniformizable the sense of Finn, if it is conjugate with a constant one, both 
plane vector fields. In our case we assume that the constant vector field can be defined 
over the torus. We will see that this point of view will allows us to obtain an application 
to the existence of a first integral for certain vector fields. \\

We have organized the paper in a unique section where we develop the ideas described above.
Let us introduce some terminology for the paper. 
Let $M$ be a manifold, we denote by $T(M)$ he bundle of tangent vector spaces of $M$.
Let $D\subseteq\mathbb{R}^2$ be an closed plane domain, recall that a vector field $X\colon D\to T(D)$ is 
said to be uniformizable if it is conjugate with a constant vector field $U\colon E\to T(E)$ in the plane, 
for a certain plane domain $E$, i.e. if there exists a diffeomorphic map $\varphi\colon D\to E$ so that 
$d\varphi \circ X = U \circ \varphi$ (see \cite[Definition 3]{Finn} and \cite[Section 1.3]{Dum-Lli-Art}). 
We denote the torus by $T^2$. The definition of the index of a direction field 
along a curve can be found in \cite[Section 1]{Finn}. Let us define it for a vector field.
We assume that a Jordan curve is an smooth injective closed curve. As usual we sometimes 
identify the parametrization of a curve with its image. Let $D$ be a plane domain and let 
$\Gamma\colon[0,2\pi]\to D$ be a Jordan curve on $D$. 
Let $X_\Gamma$ be the vector field defined on $\Gamma$, and $\delta(X_\Gamma)$ denotes the 
change of angle from the vector $X_\Gamma(0)$ to the vector $X_\Gamma(2\pi)$. 
Let $X$ be a vector field defined on $D$, we define the \emph{index of the 
vector field $X$ along the curve $\Gamma$} as 
$$
\operatorname{ind}_{\Gamma}(X) := \cfrac1{2\pi} \,\delta(X_\Gamma)\,.
$$
A well known formula to compute this index is: 
\begin{equation}\label{eqindex}
\operatorname{ind}_{\Gamma}(X) = \cfrac1{2\pi} \int_0^{2\pi} 
\cfrac{\det({X|}_{\Gamma},({X|}_{\Gamma})')}{\|{X|}_{\Gamma}\|^2} \,dt ,
\end{equation}
see for instance \cite{Dru-Fed}. A parametrization of the vector field can be 
written as $X = P \frac{\partial}{\partial x} + Q \frac{\partial}{\partial y}$, where 
$P,Q\colon D\to \mathbb{R}$ and $x,y$ are suitable coordinates of $D$. 
Let $h\colon D\to \mathbb{R}$ be a differentiable function.
We say that $h$ is a \emph{first integral of $X$} if $X(h) = P h_x + Q h_y = 0$, 
where $h_x$ and $h_y$ denotes the respective derivatives.
We will mention a suitable domain called \emph{corona}, which is 
a set of the form $\{(x,y)\in\mathbb{R}^2 : 0 < r < \|(x,y)\|_2 < R <\infty\}$.
We denote by $S^n$ the sphere of dimension $n\in\mathbb{N}$. We will identify 
$T^2$ with $S^1\times S^1$ and with $\mathbb{R}^2/[0,2\pi]^2$. The \emph{type} $(p,q)$ of a Jordan curve on the torus 
($S^1\times S^1$) is the number of laps around each circumference $S^1$. 
We can see the type as an element of the homotopy group $\pi_1(T^2)$.
We refer to \cite{Arn,Dum-Lli-Art} for notation, definitions and basic 
results that not appear here. 

\section{First integrals and uniformization over the torus}

In this unique section we introduce the concept of uniformization 
over the torus and illustrates by means of an example that such a concept
is useful for the existence of a first integral of a given vector field.
Then, we give a condition for the uniformization over the torus of certain 
vector fields that involves the index along the boundary of the domain 
of these vector fields. We will assume that a domain is a bounded and connected 
region delimited by Jordan curves. We also assume that every function and 
vector field will be sufficiently differentiable.

\begin{definition}
Let $r\ge 1$ and $D\subseteq\mathbb{R}^2$ be a bounded and
connected domain. Let $X\colon D\to T(D)$ be a  vector field. 
Let $\Delta\subseteq T^2$ be a domain. We say that $X$ is \emph{uniformizable 
over the torus} if there exists $\Delta\subseteq T^2$ such that $X$ is conjugate 
to a constant vector field defined on $\Delta$, i.e. there exists a constant 
vector field $Z\colon\Delta\to T(\Delta)$ and a diffeomorphism $\psi\colon D\to\Delta$ 
such that $d\psi\circ X = Z\circ\psi$.
\end{definition}
In the present paper we just deal with the definition above, over the torus, 
but one can easily extend it to any other manifold.
 
\begin{example}
The constant vector field $X = a \;\partial/\partial x + b \;\partial/\partial y$, 
where $a,b\in\mathbb{R}$ so that $a/b\in\mathbb{Q}\setminus\{0\}$, 
defined in a domain of $\mathbb{R}^2$, is a clear example of uniformizable 
vector field over the torus and over the cylinder.
\end{example}

Let us give an application to the existence of a first integral.
Observe that we also can apply the following to the Finn uniformization.  
\begin{proposition}
Let $D\subseteq\mathbb{R}^2$ be a compact connected domain and $X$ be a vector field on $D$
with Hausdorff flow. Assume that $X$ is uniformizable over the torus, then there is a 
first integral for $X$.
\end{proposition}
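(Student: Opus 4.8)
The plan is to pull back a first integral of the constant vector field $Z$ on $\Delta\subseteq T^2$ along the conjugating diffeomorphism $\psi$. A constant vector field $Z = c\,\partial/\partial u + d\,\partial/\partial v$ on the torus $\mathbb{R}^2/[0,2\pi]^2$ has orbits that are straight lines of slope $d/c$; when $d/c$ is rational these orbits are closed curves, and in all cases the torus is foliated by the orbit closures. On $\Delta$, which is a (bounded) sub-domain of $T^2$, a first integral for $Z$ is a function $g$ that is constant along these lines; locally $g(u,v) = du - cv$ works, and the question is whether this descends to a globally well-defined differentiable function on $\Delta$. If it does, then $h := g\circ\psi$ satisfies $X(h) = d(g\circ\psi)\circ X = dg\circ d\psi\circ X = dg\circ Z\circ\psi = (Z(g))\circ\psi = 0$, so $h$ is a first integral for $X$.

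First I would examine where the Hausdorff flow hypothesis enters. The orbit space of $X$ (the quotient of $D$ by the flow) being Hausdorff is precisely the condition that guarantees the existence of a continuous — and here, with the smoothness assumptions in force, differentiable — first integral that separates orbits, at least locally near each orbit, and the conjugacy transports this property to $Z$ on $\Delta$. So the second step is: observe that $\psi$ carries the flow of $X$ to the flow of $Z$, hence the orbit space of $Z|_\Delta$ is homeomorphic to that of $X$, and therefore also Hausdorff. The third step is then to produce, on $\Delta$ with Hausdorff orbit space for the constant field $Z$, a genuine first integral: since $\Delta$ is an open subset of the torus on which the linear flow has Hausdorff quotient, $\Delta$ cannot contain a full dense-winding orbit, so the leaves of the foliation restricted to $\Delta$ are nicely embedded arcs (or circles), and the transverse coordinate gives the desired differentiable $g$. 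Pull back via $\psi$ to finish.

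The main obstacle I anticipate is the third step — showing that the Hausdorff orbit space condition genuinely forces the existence of a \emph{globally defined differentiable} first integral $g$ on $\Delta$, rather than merely a function defined up to the monodromy of going around the torus. One has to rule out that $\Delta$ wraps around $T^2$ in a way that makes $du-cv$ multivalued; the Hausdorff hypothesis on the quotient is the lever that excludes the pathological (irrational, dense) case and also excludes a domain that closes up around a rational cycle in a bad way. I would handle this by noting that a neighbourhood of any orbit on which the quotient is Hausdorff admits a local transversal section on which the flow box coordinates are globally consistent, and patch these together over $\Delta$ using connectedness of $D$ (hence of $\Delta$) — the obstruction to patching is exactly a failure of Hausdorffness. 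The remaining steps (the chain-rule computation that $h=g\circ\psi$ is a first integral, and checking differentiability) are routine.
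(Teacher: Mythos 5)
Your overall route is the same as the paper's: both arguments pull back the affine first integral $H(u,v)=bu-av$ of the constant field $Z=a\,\partial/\partial u+b\,\partial/\partial v$ through the conjugacy $\psi$ and verify $X(H\circ\psi)=0$ by the chain rule (the paper phrases this as solving the first--order system \eqref{eq1}, which is morally the statement $dh=\psi^{*}(dH)$). You have also correctly isolated the one non-routine point, which the paper passes over by citing solvability of \eqref{eq1}: the function $bu-av$ is never single-valued on the torus, so $H\circ\psi$ is a priori defined only up to the periods of the closed $1$-form $\psi^{*}(b\,du-a\,dv)$ over the generators of the first homology of $D$, and $D$ need not be simply connected (the case of interest later in the paper is an annulus).

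The gap is in your third step. The claim that ``the obstruction to patching is exactly a failure of Hausdorffness'' is not justified, and taken literally it is false. Hausdorffness of the orbit space does exclude a dense irrational winding, but the paper explicitly assumes $a/b$ rational, and then the orbit space of the linear flow is an interval or a circle --- perfectly Hausdorff --- while $bu-av$ is still multivalued on any $\Delta$ carrying a cycle of type $(p,q)$ with $bp-aq\neq 0$, since the period of $b\,du-a\,dv$ over such a cycle is $2\pi(bp-aq)$. So Hausdorffness alone does not make your transverse coordinate globally consistent. To close the argument you would need either (i) to show that every homology class carried by $\Delta$ is proportional to $(a,b)$, so that all periods vanish and $H\circ\psi$ is genuinely single-valued on $D$; or (ii) to abandon the linear integral and argue directly that a Hausdorff one-dimensional orbit space is an interval or a circle, that the quotient map is a smooth submersion, and that composing it with a non-constant smooth function on the quotient yields a first integral. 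Either of these is a real piece of work that your sketch does not contain, so the proof is incomplete at exactly the point you flagged. (To be fair, the paper's own proof shares this lacuna: the integrability and period conditions for \eqref{eq1} are asserted via a reference rather than checked.)
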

\begin{proof}
By hypothesis, there is a diffeomorphism $\psi=(f,g)$ through which 
$X$ is conjugate to a constant vector field 
$Z := a \frac{\partial}{\partial u} + b\frac{\partial}{\partial v}$
on the torus, where $a,b\in\mathbb{R}^r\setminus\{0\}$ satisfy that 
$a/b$ (or $b/a$) is a rational number. Moreover, the partial differential equation  
\begin{equation}\label{eq1}
\left\{
\begin{array}{l}
h_x = b \,f_x - a \,f_y \\
h_y = b \,g_x - a \,g_y
\end{array}
\right.
\end{equation}
has solution (see for instance \cite{Mall}). Let $h$ be such a 
solution. We claim that $h$ is a first integral of $X$.

Let us denote by $(u,v)$ the coordinates in $T^2$ and by $(x,y)$ the coordinates 
in $D$. By hypothesis we know that $d\psi\circ X = Z \circ\psi$. Let us denote 
$X = P(x,y) \frac{\partial}{\partial x} + Q(x,y) \frac{\partial}{\partial y}$, 
where $\dot{x} = P(x,y)$ and $\dot{y} = Q(x,y)$.
On the one hand, since $Z$ is constant, there exists $H$ a first integral of $Z$. 
We can choose, for instance, $H(u,v) = b u - a v$, since $H_u = b$ and $H_v = -a$ and 
$Z(H) = a H_u + b H_v = 0$. On the other hand 
we can choose the variables $u$ and $v$ so that $\psi^{-1}(u,v) = (x,y)$, hence 
\[
d\psi\circ X \circ \psi^{-1}= (f_x \dot{x} + g_x \dot{y}) \frac{\partial}{\partial u} + (f_y \dot{x} + g_y \dot{y}) \frac{\partial}{\partial v} .
\]
Therefore
\begin{equation}\label{eq2}
\begin{split}
d\psi\circ X \circ \psi^{-1} (H) & = (f_x \dot{x} + g_x \dot{y}) H_u + (f_y \dot{x} + g_y \dot{y}) H_v \\
& = \dot{x}(H_u f_x + H_v f_y) + \dot{y}(H_u g_x + H_v g_y) \\
& = \dot{x}(b f_x - a f_y) + \dot{y}(b g_x - a g_y) = 0 .
\end{split}
\end{equation}
We know that $h$ is solution of \eqref{eq1}, hence by \eqref{eq2} 
\[
X(h) = \dot{x} h_x + \dot{y} h_y = \dot{x}(b \,f_x - a \,f_y) + \dot{y}(b \,g_x - a \,g_y) = 0.
\]
That means that $h$ is a first integral for $X$.
\end{proof}

\begin{example}
Let $D\subseteq\mathbb{R}^2$ be a plane domain and $X\colon D\to T(D)$ be vector field.
Let $(a,b)\in\mathbb{R}^2\setminus 0$ such that $a/b$ (or $b/a$) is rational.
Assume that there is a diffeomorphism $\psi \colon D \to \Delta\subseteq T^2$ as the one defined 
above $\psi := (f,g)$ and assume that $X:=P\frac{\partial}{\partial x} + Q\frac{\partial}{\partial y}$ 
is such that
$$
P := \dfrac{a g_y - b f_y}{f_x g_y - f_y g_x}
$$
and
$$
Q := \dfrac{b f_x - a g_x}{f_x g_y - f_y g_x},
$$
where the numerators does not vanishes at the same time in $D$, so the 
vector field will be regular. Observe that the denominator 
is the Jacobian of $\psi$, hence it not vanishes in $D$. This vector field is uniformizable 
by means of the constant vector field $a \frac{\partial}{\partial u} + b \frac{\partial}{\partial v}$, 
where $u,v$ denotes the coordinates in the torus.
\end{example}

Proposition and example above illustrates that uniformization over the torus 
could be a tool in order to find out the existence of a first integral, in 
consequence it is interesting to find conditions related to such a property. 
In the following theorem and corollary we show that there exist certain uniformizables 
vector fields such that the index along suitable Jordan curves, that we can take closer 
to the the boundary of its domain, is equal to $1$. Later, we easily deduce a corollary 
for a more general class of vector fields.\\

Let $Z\colon T^2\to T(T^2)$ be a constant vector field and $\Gamma$ be 
a Jordan curve of type $(p,q) \in\mathbb{Z}^2\setminus\{0\}$ on the torus. 
Clearly $\operatorname{ind}_\Gamma Z = 0$. Then we consider the set 
$T^2\setminus \Gamma_0$ for a suitable Jordan curve with the same type as 
$\Gamma$ and transform it diffeomorphically in a corona on $\mathbb{R}^2$. 
Next result states that the index of the image of such a constant vector 
field along the image of $\Gamma$ is $1$.
\begin{theorem}\label{thm1}
Let $\Gamma$ be a  Jordan curve on the torus $T^2$ with type 
$(p,q)\in\mathbb{Z}^2\setminus\{0\}$. Let $Z\colon T^2\to T(T^2)$ 
be a constant vector field. Then, there exist a Jordan curve $\Gamma_0$ 
such that $\Gamma\cap\Gamma_0 = \emptyset$, a domain $D\in \mathbb{R}^2$ 
and a diffeomorphism $\varphi\colon  T^2\setminus\Gamma_0\to D$ such that 
$$
\operatorname{ind}_{\varphi\circ\Gamma}(d\varphi\circ Z\circ \varphi^{-1}) = 1\,.
$$
\end{theorem}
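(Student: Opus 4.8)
The plan is to reduce everything to a concrete model of the torus and then compute the angular variation of a constant vector field directly. First I would choose coordinates so that the constant vector field $Z$ has direction $(\alpha,\beta)\in\mathbb{R}^2\setminus\{0\}$ in the standard flat coordinates $(u,v)$ on $T^2=\mathbb{R}^2/(2\pi\mathbb{Z})^2$, and I would pick $\Gamma_0$ to be a straight Jordan curve of the same type $(p,q)$ as $\Gamma$ — concretely, the projection of a line segment in $\mathbb{R}^2$ of direction $(p,q)$, which is a genuine embedded circle because $(p,q)\in\mathbb{Z}^2\setminus\{0\}$ (after dividing by $\gcd(p,q)$ if one wants it primitive; otherwise one takes a nearby primitive-type curve, which suffices since $\Gamma$ of type $(p,q)$ is freely homotopic across any such $\Gamma_0$ it does not meet). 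By a small isotopy I may assume $\Gamma\cap\Gamma_0=\emptyset$, since two disjointly-realizable curves of the same type can be separated. Cutting $T^2$ along $\Gamma_0$ yields a cylinder, and a cylinder is diffeomorphic to a planar corona $A=\{r<\|(x,y)\|<R\}$; call this diffeomorphism $\varphi\colon T^2\setminus\Gamma_0\to D:=A$.

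Next I would identify what $\Gamma$ becomes under $\varphi$. Since $\Gamma$ is a Jordan curve of type $(p,q)$ disjoint from $\Gamma_0$, in the cut-open cylinder it is a Jordan curve that is \emph{not} null-homotopic — it wraps once around the core of the cylinder — so $\varphi\circ\Gamma$ is a Jordan curve in the corona $A$ that encircles the inner boundary circle exactly once. The key classical fact I will invoke is the relation between the index of a vector field along a Jordan curve and the winding of that curve: for any smooth Jordan curve $C$ bounding a region $\Omega$ in the plane, and any vector field $Y$ that is regular (nonvanishing) on $C$, one has $\operatorname{ind}_C(Y)$ equal to the sum of the (Poincaré–Hopf) indices of the zeros of $Y$ inside $\Omega$ — but here I want the cleaner statement that $\operatorname{ind}_C(Y)$ depends only on the homotopy class of the \emph{Gauss map} $t\mapsto Y(C(t))/\|Y(C(t))\|$ into $S^1$, together with the fact that $d\varphi\circ Z\circ\varphi^{-1}$ is nonvanishing on all of $D$ (because $Z$ is nonvanishing and $\varphi$ is a diffeomorphism). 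So $\operatorname{ind}_{\varphi\circ\Gamma}(d\varphi\circ Z\circ\varphi^{-1})$ equals the index of the transported field along \emph{any} Jordan curve in $A$ isotopic to $\varphi\circ\Gamma$ through curves on which the field is nonzero — in particular along $\varphi\circ\Gamma_\varepsilon$ for a straight-line curve $\Gamma_\varepsilon$ of direction $(p,q)$ pushed slightly off $\Gamma_0$.

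For that last curve the computation becomes explicit. I would compute $\operatorname{ind}_{\varphi\circ\Gamma_\varepsilon}$ by the formula \eqref{eqindex}, or better by a homotopy argument: the transported field $d\varphi\circ Z\circ\varphi^{-1}$ extends continuously and nonvanishingly over the closed annulus $\overline{A}$ except along the single slit $\varphi(\Gamma_0)$, and its index along a curve going once around the inner hole is computed by tracking the total turning of the direction of $Z$ as one traverses $\Gamma_0$ once around the torus in the $(p,q)$ direction. Since $Z$ has a fixed direction on $T^2$ but the planar identification $\varphi$ ''unrolls'' the cylinder, the direction field of the pulled-back $Z$, measured against the radial coordinate of $A$, makes exactly one full turn — this is the same phenomenon by which a constant field on a cylinder, cut and flattened to an annulus, acquires winding $1$ around the hole. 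I expect the main obstacle to be making this last sentence rigorous: one must choose $\varphi$ (equivalently, the diffeomorphism cylinder $\to$ corona) carefully enough that the turning number can be read off, and then verify via \eqref{eqindex} that the integral equals $1$ and not $-1$ or $0$ — i.e. controlling the \emph{sign and normalization} of the winding, which is where orientation conventions for the type $(p,q)$, for $\Gamma$ versus $\Gamma_0$, and for the corona must be pinned down consistently.
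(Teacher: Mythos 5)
Your overall architecture is the same as the paper's: pick a straight $(p,q)$-curve $\Gamma_0$ parallel to (a straightened) $\Gamma$, cut the torus along it to get a cylinder, unroll the cylinder onto a planar corona, and read off the index there. The homotopy-invariance reduction (replacing $\Gamma$ by a nearby straight $(p,q)$-curve, legitimate because the transported field is nonvanishing on all of $D$) is a clean way to justify the ``without loss of generality'' that the paper also uses. However, there is a genuine gap at the decisive step. The sentence ``the direction field of the pulled-back $Z$, measured against the radial coordinate of $A$, makes exactly one full turn'' \emph{is} the theorem: everything before it only sets the stage, and you yourself flag it as the unproven obstacle. The ``constant field on a cylinder acquires winding $1$ when flattened to an annulus'' picture cannot be invoked as a known phenomenon here, because the statement being proved is precisely that phenomenon, and the value of the index (as opposed to its invariance under homotopy of the curve) genuinely depends on which diffeomorphism cylinder $\to$ corona one writes down. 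A proof must therefore exhibit a concrete $\varphi$ and compute.

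That is exactly what the paper does: it takes $\Gamma(t)=(pt,qt)$, $Z=\partial/\partial x$, $\Gamma_0(t)=(pt+1/2,qt+1/2)$, and the explicit map $\varphi=\sigma\circ\tau\circ\rho_{p,q}$ of \eqref{eq3}, where $\rho_{p,q}$ rotates $\Gamma$ onto a horizontal segment, $\tau$ translates it off the $x$-axis, and $\sigma(x,y)=(y\cos x, y\sin x)$ is the polar-coordinate map; it then evaluates the integrand of \eqref{eqindex} along $\varphi\circ\Gamma$ and finds that $\det\bigl(\tfrac{\partial\varphi}{\partial x}|_\Gamma,(\tfrac{\partial\varphi}{\partial x}|_\Gamma)'\bigr)$ equals $\|\tfrac{\partial\varphi}{\partial x}|_\Gamma\|_2^2$ identically, so the integral is $1$. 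To repair your argument with minimal effort, do the analogous computation in your model: with $\sigma(u,v)=(v\cos u,v\sin u)$ one has $d\sigma(\partial/\partial u)=v(-\sin u,\cos u)$ along $u\mapsto(u,v_0)$, whose Gauss map into $S^1$ visibly has degree $1$; then transport this through your rotation identifying the $(p,q)$-direction with the $u$-axis. Two smaller remarks: the index defined by \eqref{eqindex} is unchanged under reversal of the parametrization (both $({X|_\Gamma})'$ and $dt$ change sign), so the sign ambiguity you worry about at the end is not actually present; and your observation that a Jordan (embedded) curve forces $(p,q)$ to be primitive is correct and is glossed over by the paper, but your proposed workaround via ``a nearby primitive-type curve'' is not needed once one notes that non-primitive types simply do not occur for embedded curves.
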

\begin{proof}
We will use the representation of the torus by means of the usual equivalence 
relation in $[0,2\pi]^2$. Since the type of a curve is invariant by diffeomorphism, 
without loss of generality, we can assume that $\Gamma(t) = (pt,qt)$ for $t\in[0,2\pi]$. 
Again, without loss of generality we take the constant 
vector field $Z:=\partial/\partial x$. Thus, $\Gamma$ has type $(p,q)$ 
and the representation of $\Gamma$ in $\mathbb{R}^2/[0,2\pi]^2$ is the segment between 
$(0,0)$ and $(2p\pi,2q\pi)$ in $\mathbb{R}^2$. The angle of this segment with respect to the 
positive abscissa is $\arctan(\frac{q}{p})$. \\

Let the Jordan curve $\Gamma_0(t) := (pt+1/2,qt+1/2)$, clearly $\Gamma\cap\Gamma_0 = \emptyset$. 
We define the diffeomorphism $\varphi\colon T^2\setminus\Gamma_0 \to \mathbb{R}^2$ given by 
\begin{equation}\label{eq3}
\varphi := \sigma\circ \tau\circ \rho_{p,q}\,,
\end{equation}
where 
$$
\rho_{p,q}:=\dfrac1{p^2+q^2}
\Big(\begin{matrix}
p & q\\
-q & p
\end{matrix}\Big)
$$ 
is the clockwise rotation centered at the origin and angle 
$\arctan(\frac{q}{p})$, which move $\Gamma$ to the segment $[0,2\pi \sqrt{p^2+q^2}]\times\{0\}$; 
$\tau$ is the translation 
$$
\tau(x,y)=(x,y+2\pi) ,
$$ 
which translate the segment in the plane 
$[0,2\pi \sqrt{p^2+q^2}]\times\{0\}$ to the segment 
$[0,2\pi \sqrt{p^2+q^2}]\times\{2\pi\}$; 
and $\sigma$ is the change to polar variables, that is 
$$
\sigma(x,y)=(y\cos(x),y\sin(x)).
$$ 
Therefore 
$$
\varphi(x,y)=\dfrac{1}{p^2+q^2}\big((py-qx+2\pi)\cos(px+qy)\,,\,(py-qx+2\pi)\sin(px+qy)\big)\,.
$$
We now use the formula \eqref{eqindex} in order to obtain the index 
of the vector field $d\varphi \circ Z \circ \varphi^{-1}$ along 
$\varphi \circ \Gamma$ by a direct computation. 
On the one hand we have that 
\begin{equation}\label{eqcosi}
(py-qx)|_{\Gamma(t)}=p(qt)-q(pt)=0 \quad\mbox{ and }\quad (px+qy)|_{\Gamma(t)}=(p^2+q^2)t\,.
\end{equation}
For the aim of brevity we define $H:=p^2+q^2$, $c := \cos(Ht)$ and $s := \sin(Ht)$. Hence 
$$
\varphi\circ\Gamma = \cfrac{2\pi}{H}(c,s)\,,
$$
defines a curve on a certain domain $D\subset \mathbb{R}^2$. 
On the other hand we get the following commutative diagram
$$
\xymatrix{
[0,2\pi]\ar[r]^\Gamma\ar[rd]_{\varphi \circ \Gamma}& T^2\ar[r]^Z\ar[d]_{\varphi}&T( T^2)\ar[d]^{d\varphi}\\
&\quad D\subset\mathbb{R}^2\ar[r] & T(D)
}
$$
Therefore,  
\begin{equation}\label{eqvect}
{(d\varphi \circ Z\circ \varphi^{-1})|}_{\varphi\circ\Gamma} = {d\varphi \circ Z|}_{\Gamma} 
= {d\varphi(\partial/\partial x)|}_{\Gamma} = {\dfrac{\partial \varphi}{\partial x}|}_{\Gamma}\,.
\end{equation}
The partial derivative of $\varphi$ with respect to $x$ is 
\begin{equation*}
\begin{split}
\dfrac{\partial \varphi}{\partial x}(x,y) = \cfrac{1}{H} & \big(-q\cos(px+qy)-p(py-qx+2\pi)\sin(px+qy),\\
&-q\sin(px+qy)-p(py-qx+2\pi)\cos(px+qy)\big)\,.
\end{split}
\end{equation*}
Thus, taking into account \eqref{eqcosi} and the given notation, we obtain that 
$$
{\tfrac{\partial \varphi}{\partial x}|}_{\Gamma} = \dfrac{1}{H}(-q c - 2 \pi p s, -q s - 2 \pi pc)\,.
$$
Now, having in mind that $c' = -H s$ and $s' = H c$, we can compute the derivative 
$$
({\tfrac{\partial \varphi}{\partial x}|}_{\Gamma})' = (q s + 2 \pi pc, -q c - 2\pi ps)\,.
$$
Finally, we have that the square norm is 
$$
{\big\| {\tfrac{\partial \varphi}{\partial x}|}_{\Gamma} \big\|}_2^2 = \frac{1}{H} \big((q c+2 \pi ps)^2 + (q s+2 \pi pc)^2 \big)\,, 
$$
in consequence 
$$
\det({\tfrac{\partial \varphi}{\partial x}|}_{\Gamma},({\tfrac{\partial \varphi}{\partial x}|}_{\Gamma})') = 
\cfrac1{H} \big((q c+2p \pi s)^2 + (q s+2p \pi c)^2 \big) = {\big\| {\tfrac{\partial \varphi}{\partial x}|}_{\Gamma} \big\|}_2^2\,. 
$$
This equality together with \eqref{eqvect}, implies that
$$
\operatorname{ind}_{\varphi\circ\Gamma}(d\varphi \circ Z\circ \varphi^{-1}) 
= \cfrac{1}{2\pi} \int_0^{2\pi} \frac{\det({\tfrac{\partial \varphi}{\partial x}|}_{\Gamma},
({\tfrac{\partial \varphi}{\partial x}|}_{\Gamma})')} {{\big\| {\tfrac{\partial \varphi}{\partial x}|}_{\Gamma} \big\|}_2^2} \, dt = 1\,, 
$$
which is the claim of the theorem.
\end{proof}

\begin{remark}
Observe that the domain $D$ given in the proof of theorem above can be taken  
diffeomorphic to the interior of the corona $\{(x,y)\in\mathbb{R}^2 : 1\le\sqrt{x^2+y^2} \le2\}$.
Thus any vector field defined on a compact domain dipheomorphic to $C$, defined 
by two contours $\Gamma_1$ and $\Gamma_2$, so that it is uniformizable over the 
torus by means of the inverse of the map $\varphi$ defined in \eqref{eq3} is a 
good candidate that could satisfies that $\operatorname{ind}_{\Gamma_i} X = 1$, 
for $i=1,2$.\\
\end{remark}
\begin{remark}
One can see that our approach to the study of the index is quite different to the 
one given by Finn in \cite{Finn}. There are two main reasons that bring us to 
change the original point of view: First, the notion of index along a curve has 
evolved, in fact, sometimes it is called winding number of a vector field (see e.g. 
\cite{Dru-Fed}). Second, some results given in the original paper, although they were 
correct, they were based (a little) on the intuition.
\end{remark}

In our last result we provide a necessary condition for the uniformization over 
the torus for suitable vector fields. 
\begin{corollary}
Let $X$ be a  vector field on a bounded and connected domain. Assume that 
$X$ is uniformizable over the torus through a domain in $T^2$ 
which is diffeomorphic to $T^2\setminus \Gamma_0$ for a 
Jordan curve $\Gamma_0$. Then, for each Jordan curve $\Gamma$ 
with the same type as $\Gamma_0$ satisfying that $\Gamma\cap\Gamma_0=\emptyset$, 
there exists a diffeomorphism $\psi$ such that
$$
\operatorname{ind}_{\varphi\circ\Gamma}(d\psi\circ X\circ \psi^{-1}) = 1\,.
$$
\end{corollary}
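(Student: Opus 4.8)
The plan is to peel off the uniformizing conjugacy and reduce the statement to Theorem~\ref{thm1}. Writing $D'$ for the bounded connected domain carrying $X$, uniformizability over the torus gives a constant vector field $Z$ and a diffeomorphism $\psi_0\colon D'\to\Delta$ with $d\psi_0\circ X = Z\circ\psi_0$, where $\Delta\subseteq T^2$ and, by hypothesis, $\Delta$ is identified with $T^2\setminus\Gamma_0$. Since $Z$ is constant on a subset of $T^2=\mathbb{R}^2/[0,2\pi]^2$, it extends to a globally defined constant vector field on $T^2$, still denoted $Z$. Let $(p,q)$ be the common type of $\Gamma$ and $\Gamma_0$; a Jordan curve on $T^2$ has primitive type, so $\gcd(p,q)=1$. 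As $\Gamma\cap\Gamma_0=\emptyset$ and $\Gamma$, $\Gamma_0$ are essential simple closed curves of the same type, $\Gamma$ lies in $T^2\setminus\Gamma_0$, so that $\gamma:=\psi_0^{-1}\circ\Gamma$ is a well defined curve in $D'$.

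Next I would apply Theorem~\ref{thm1} to the constant field $Z$ on $T^2$ and the Jordan curve $\Gamma$, obtaining a plane domain $D$ (diffeomorphic to the interior of a corona) and a diffeomorphism $\varphi\colon T^2\setminus\Gamma_0\to D$ with $\operatorname{ind}_{\varphi\circ\Gamma}(d\varphi\circ Z\circ\varphi^{-1})=1$. Two points need attention. First, Theorem~\ref{thm1} manufactures its own auxiliary Jordan curve, and I need it to be the prescribed $\Gamma_0$: inspecting the proof, that curve is merely an essential push-off of $\Gamma$ of the same type disjoint from $\Gamma$ (the explicit choice $(pt+1/2,qt+1/2)$ being inessential), and since any two disjoint essential simple closed curves of the same type on the torus are interchanged by an ambient diffeomorphism, one may rerun the construction with $\Gamma_0$ in its place. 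Second, the normalizations used there ($\Gamma$ to the segment from $(0,0)$ to $(2p\pi,2q\pi)$ and $Z$ to $\partial/\partial x$) must be realized by a diffeomorphism of $T^2$: for the type this is an element of $\mathrm{GL}(2,\mathbb{Z})$, available because $\gcd(p,q)=1$, and for $Z$ one uses that the index produced by the cut-and-flatten map $\varphi$ depends only on that map and on the curve, not on the particular nonzero constant direction of $Z$, since every constant field has index $0$ along every Jordan curve on the flat torus. Absorbing these normalizing diffeomorphisms into $\varphi$, one recovers the displayed equality for the original data $\Gamma$, $Z$, $\Gamma_0$.

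Finally I would put $\psi:=\varphi\circ\psi_0\colon D'\to D$. Then $d\psi\circ X\circ\psi^{-1}=d\varphi\circ(d\psi_0\circ X\circ\psi_0^{-1})\circ\varphi^{-1}=d\varphi\circ Z\circ\varphi^{-1}$ on $D$, while $\psi\circ\gamma=\varphi\circ\psi_0\circ\psi_0^{-1}\circ\Gamma=\varphi\circ\Gamma$, so by Theorem~\ref{thm1}
\[
\operatorname{ind}_{\varphi\circ\Gamma}(d\psi\circ X\circ\psi^{-1})=\operatorname{ind}_{\varphi\circ\Gamma}(d\varphi\circ Z\circ\varphi^{-1})=1 .
\]
I expect the reconciliation of Theorem~\ref{thm1}'s internal normalizations with the given unnormalized data — above all, pinning its auxiliary curve to the prescribed $\Gamma_0$ — to be the main obstacle; this is precisely where the hypotheses "$\Gamma\cap\Gamma_0=\emptyset$" and "$\Gamma$ of the same type as $\Gamma_0$" are used, the rest being formal. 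Should the hypothesis only assert that $\Delta$ is diffeomorphic (not equal) to $T^2\setminus\Gamma_0$, a further argument is needed, since then $Z$ need not remain constant once transported to $T^2\setminus\Gamma_0$.
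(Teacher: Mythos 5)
Your proposal is correct and follows essentially the same route as the paper: factor the uniformizing conjugacy through $T^2\setminus\Gamma_0$, observe that the transported field is the constant field $Z$, apply Theorem~\ref{thm1} to produce the cut-and-flatten map, and take $\psi$ to be the composite. The only difference is that you explicitly flag the normalization issues (pinning the theorem's auxiliary curve to the prescribed $\Gamma_0$, and the fact that a mere diffeomorphism $\Delta\to T^2\setminus\Gamma_0$ need not preserve constancy of $Z$), which the paper's diagram-chase passes over with the assertion that $\widehat{Z}$ ``can be chosen'' constant.
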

\begin{proof}
The proof is easy to deduce taking into account the following diagram
$$
\xymatrix{
D\ar[r]^\varphi \ar[d]_X & \Delta \ar[r]^{\!\!\!\!\!\alpha}\ar[d]_Z & T^2\setminus\Gamma_0\ar[r]^{\;\;\beta}\ar[d]_{\widehat{Z}} & C \ar[d]_U\\
T(D) \ar[r]^{d\varphi} & T(\Delta) \ar[r]^{\!\!\!\!\!d\alpha} & T(T^2\setminus\Gamma_0) \ar[r]^{\;\;\;d\beta} & T(C) \,,\\
}
$$
where $Z$, $\widehat{Z}$, $\varphi$ and $\alpha$ are the maps defined 
in the hypothesis and $U$ and $\beta$ are defined in Theorem \ref{thm1}.
Observe that we can choose $\widehat{Z}$ as a constant vector field, since 
$Z$ is constant. Now, having in mind that the vector field 
$\widehat{Z} = d\alpha\circ d\varphi\circ X \circ \varphi^{-1}\circ\alpha^{-1}$  
is constant and applying theorem above for each $\Gamma$ as in the hypothesis, 
we obtain the result.
\end{proof}

Despite we cannot give a sufficient condition, the condition  
for the index is meaningful and provides a good clue for the wanted uniformization.\\

{\bf Acknowledgment.} The original idea of a generalization of the uniformization in the sense 
of Finn was suggested by Professor Jos\'{e} A. Mart\'{\i}nez Alfaro around the year 2010.
I am very grateful for his support and encouragement along those years.

\bibliographystyle{amsplain}

\end{document}